\theoremstyle{plain}
\newtheorem{theorem}{Theorem}[section]
\theoremstyle{remark}
\newtheorem{remark}[theorem]{Remark}
\newtheorem{example}[theorem]{Example}
\theoremstyle{plain}
\newtheorem{lemma}[theorem]{Lemma}
\newtheorem{proposition}[theorem]{Proposition}
\newtheorem{definition}[theorem]{Definition}
\numberwithin{equation}{section}
\def\N{{\mathbb N}}
\def\R{{\mathbb R}}
\def\C{{\mathbb C}}
\newcommand{\E}{{\mathbb E}}
\renewcommand{\P}{{\mathbb P}}
\newcommand{\F}{{\mathcal F}}
\renewcommand{\Re}{\operatorname{Re}}
\newcommand{\calL}{{\mathcal L}}
\newcommand{\n}{\Vert}
\newcommand{\lb}{\langle}
\newcommand{\rb}{\rangle}
\begin{document}

\author{Roland Schnaubelt}
\address{R. Schnaubelt, Department of Mathematics \\
Karlsruhe Institute of Technology\\
D-76128  Karls\-ruhe\\Germany}
\email{schnaubelt@kit.edu}

\dedicatory{Dedicated to Jan Pr\"uss, in admiration of his unfaltering spirit \\ and his outstanding mathematical work.}

\author{Mark Veraar}
\address{M. Veraar, Delft Institute of Applied Mathematics\\
Delft University of Technology \\ P.O. Box 5031\\ 2600 GA Delft\\The
Netherlands} \email{M.C.Veraar@tudelft.nl}

\thanks{The second author
is supported by the VIDI subsidy 639.032.427 of the Netherlands Organisation for Scientific Research (NWO)}


\title
[Stochastic Volterra equations]
{Regularity of stochastic Volterra equations \\ by functional calculus methods}

\begin{abstract}
We establish pathwise continuity properties of solutions to a stochastic Volterra equation
with an additive noise term given by a local martingale. The deterministic part is governed
by an operator with an $H^\infty$-calculus and a scalar kernel. The proof relies on
the dilation theorem for positive definite operator families on a Hilbert space.
\end{abstract}

\keywords{Stochastic Volterra equation, pathwise continuity, local martingale, dilation, $H^\infty$-calculus.}

\subjclass[2010]{Primary: 60H20. Secondary: 45N05, 60H15.}

\maketitle

\section{Introduction}

In this paper we investigate pathwise continuity properties of the solutions to the stochastic Volterra equation
\begin{equation}\label{eq:Volterra}
u(t) = u_0+\int_0^t a(t-s) Au(s)\, ds + L(t), \ \ \ t\geq 0.
\end{equation}
Here $A$ is a closed and densely defined operator on a Hilbert space $(X, \lb\cdot, \cdot\rb)$, the kernel
$a$ belongs to $L^1_{\rm loc}(\R_+)$, and $L$ is an $X$-valued local $L^2$-martingale with c\`adl\`ag (or continuous) paths.
Stochastic Volterra equations are widely studied and we refer the reader to
\cite{BGK,CKK,ClDP96,ClDP97,ClDP00,CDPP,DeschLonden,DeLo09,Kar07} and the references given there.
In this paper we show that for a large class of kernels $a$ and operators $A$, there exists a version of the solution $u$
of \eqref{eq:Volterra} for which the paths are c\`adl\`ag (or continuous) using dilation theory, $H^\infty$-calculus
and the theory of deterministic Volterra equations.

Volterra equations arise in physical models whose constitutive laws depend on the history of the material.
Such behavior occurs in viscoelastic fluids or solids, in heat conduction with memory, or in electromagnetism.
In accordance with the theory in Pr\"uss' monograph  \cite{Pruss93} we look at an integrated formulation
of such problems, which  fits well to stochastic evolution equations.
The stochastic term $L(t)$ can be understood as a time integral of a given external random force
or a heat supply. We refer to Chapter~5 of  \cite{Pruss93} for a discussion of the underlying deterministic
models.

For more regular paths one could apply in \eqref{eq:Volterra} the theory developed in
\cite{Pruss93} pathwise (under appropriate conditions on $a$).
For instance, if $L$ has H\"older continuous paths and if the deterministic part
of \eqref{eq:Volterra} is of parabolic type  in the sense of \cite {Pruss93}, then $u$ also has H\"older
continuous paths by Theorem~3.3 of  \cite{Pruss93}.  However, for general local $L^2$-martingales H\"older continuity
is quite restrictive and even impossible if jumps occur. On the other hand, Chapter~8 of \cite{Pruss93} provides
a theory of maximal $L^p$-regularity for the deterministic part in the parabolic case, but it would only yield
$L^p$-properties of the paths, see Theorem~8.7 of \cite{Pruss93}.

In \cite{PZ14}, Peszat and Zabczyk found new conditions on $a$ under which the solution $u$ to \eqref{eq:Volterra}
has c\`adl\`ag (or continuous) trajectories. Their method is based on dilation results, which were previously
used in the case that $a=1$ and $A$ is the generator of a semigroup which satisfies $\|T(t)\|\leq e^{w t}$ for
all $t\geq 0$ and a fixed $w\in \R$, see \cite{HausSei08} and references therein. In Theorem~1 of \cite{PZ14}, it is assumed that $A$
is a self-adjoint operator so that the spectral theorem provides a functional calculus for $A$. The functional
calculus allows to reduce the problem to scalar Volterra equations.

However, many operators arising in applications fail to be selfadjoint.
For instance, an elliptic operator $A$ with $D(A) = H^2(\R^d)$ in non-divergence form
\[Au = \sum_{i,j=1}^d a_{ij} D_i D_ju + b_i D_iu + c u\]
with space-dependent coefficients $a_{ij}$, $b_i$ and $c$ is not selfadjoint in general.
In the system case, self-adjointness is even more problematic. Indeed, let $D(A) = H^2(\R^d;\R^{N})$ and
\[Au = \left(
        \begin{array}{ccc}
          A_{11} & \cdots & A_{1N} \\
          \vdots & \vdots & \vdots \\
          A_{N1} & \cdots & A_{NN}  \\
        \end{array}
      \right) \left(
                \begin{array}{c}
                  u_1 \\
                  \vdots \\
                  u_N  \\
                \end{array}
              \right)
\]
and assume that each $A_{mm}$ is itself an elliptic second order differential operator. Even
if the elliptic operators $A_{mn}$ have $x$-independent coefficients, the operator $A$ will only
lead to a self-adjoint operator if $A_{mn} = A_{nm}$ which is rather restrictive.

Under suitable ellipticity conditions and regularity assumptions on the coefficients the above
two operators possess a bounded $H^\infty$-calculus. During the last 25 years there has been a lot of progress in
the investigation of this functional calculus.  Originally it was developed by McIntosh and collaborators to solve
the Kato square root problem (see \cite{KATOproblem, Mc86}).  By now the $H^\infty$-calculus is well-established
and has become one of the central tools in operator-theoretic approaches to PDE. Any reasonable elliptic or
accretive differential operator on a Hilbert space admits a bounded $H^\infty$-calculus.

In this paper, $H^\infty$-calculus techniques allow us to show that
the solution of \eqref{eq:Volterra} has the same pathwise
continuity properties as the local $L^2$-martingale $L$, thereby covering the above indicated examples.
Besides the  $H^\infty$-calculus of the main operator $A$ we mainly assume sector conditions of the Laplace transform
of the kernel $a$, see Theorem~\ref{thm:main}.
The  $H^\infty$-calculus is first used to construct the solution operator (called \emph{resolvent}) of the  deterministic
Volterra equation with  $L=0$ by means of the solutions to a corresponding scalar problem with $A$ replaced by complex
numbers in a suitable sector. Thanks to Laplace transform techniques from \cite{Pruss93}, we can  derive the uniform
estimates on these solutions which are required to apply the $H^\infty$-calculus. Second, one employs the calculus
to check that the resolvent is positive definite in order to use the dilation argument from \cite{PZ14}.
In this step we also invoke  a different dilation  result taken from \cite{KunWeis04}. An important technical feature
are rescaling arguments which are needed since in applications usually only a shifted operator is known
to possess an $H^\infty$-calculus. We further discuss auxiliary facts, as well as examples for operators $A$ and
kernels $a$ in the second and the last section.

The $H^\infty$-calculus has already played an important role in several other works on stochastic partial
differential equations. In \cite{SeidlerLondon,VW11} it is used to derive maximal estimates for stochastic
convolutions by a dilation argument. Solutions with paths in $D((-A^{1/2}))$ almost surely are obtained via
square function estimates   in \cite{BD13,DeschLonden,DNW,NVW10,NVWgamma,Ver10,VeZi}. More indirectly,
characterizations of the $H^\infty$-calculus have already been employed in Theorem~6.14 of
\cite{DaPratoZabnewversion} and in \cite{Brz97}, in the form that $D_A(\theta,2) = D((-A)^{\theta})$ for some
$\theta\in (0,1)$ and that  $(-A)^{is}$ is bounded for all $s\in\R$, respectively.

\section{Preliminaries}

\subsection{Volterra equations}
We first recall a basic definition in the theory of Volterra equations of scalar type,
see Pr\"uss' monograph \cite{Pruss93} for details. For $\phi\in (0,\pi]$ we define the sector $\Sigma_\phi$ by
\[\Sigma_\phi = \{z\in \C\setminus\{0\}: |\arg(z)|<\phi\}.\]
Let $(A,D(A))$ be a closed, densely defined and injective operator on a Hilbert space $X$
and let $\sigma(A)$ denote its spectrum.
Such an operator $A$ is called \emph{sectorial of angle $\phi$} if
$\sigma(A)\subseteq \C\setminus\Sigma_{\pi-\phi}$ and there is a constant $C$ such that
$$ \| (\lambda -A)^{-1}\|_{\calL(X)}\leq  \frac{C}{|\lambda|}\qquad \text{whenever} \ \ \arg(\lambda) < \pi-\phi .$$
We further write $\phi_A$ for the infimum of all $\phi$ such that $A$ is sectorial of angle $\phi$.

Let $a\in L^1_{\rm loc}(\R_+)$ and $(A,D(A))$ be a closed operator. We study the Volterra equation
\begin{equation}\label{eq:volterraDet}
u(t) = f(t) + \int_0^t a(t-s) Au(s)\, ds, \qquad t\ge0,
\end{equation}
for a given measurable map $f:\R_+\to X$. Usually we extend $a$, $u$ and $f$ by zero on $(-\infty,0)$.
We then write \eqref{eq:volterraDet} as $u = f + a A*u$, where $*$ stands for the convolution. From
\cite{Pruss93} we recall the basic concept describing the solution operators of  \eqref{eq:volterraDet}.

\begin{definition}
A family $(S(t))_{t\geq 0}$ of bounded linear operators on $X$ is a called a {\em resolvent} for
\eqref{eq:volterraDet} if is satisfies the following conditions.
\begin{enumerate}[\rm (i)]
\item $S$ is strongly continuous on $[0,\infty)$ and $S(0)=I$.
\item We have $S(t)D(A)\subseteq D(A)$ and  $AS(t)x = S(t)Ax$ for all $t\geq 0$ and $x\in D(A)$.
\item The {\em resolvent equation}
\begin{equation}
S(t) x = x + \int_0^t a(t-s) AS(s)x\, ds
\end{equation}
is valid for all $x\in D(A)$ and $\geq 0$.
\end{enumerate}
\end{definition}
By Corollary~1.1 of \cite{Pruss93}, the problem \eqref{eq:volterraDet} possesses at most one resolvent.

\subsection{Functional calculus\label{subsec:funccalc}}
In this section we briefly discuss  the $H^\infty$-calculus which was developed by McIntosh \cite{Mc86} and many
others. We  also present important classes of examples below. For details we refer to \cite{Haase:2},
\cite{KunWeis04} and the references therein.

Let $H^\infty(\Sigma_\phi)$ denote the space of all bounded analytic functions
$f:\Sigma_\phi\to \C$, and $H_0^\infty(\Sigma_{\phi})$ be the subspace of all
$f\in H^\infty(\Sigma_{\phi})$ for which there exist $\varepsilon>0$ and $c\ge 0$ such that
$$ |f(z)| \le \frac{c\,|z|^\varepsilon}{(1+|z|)^{2\varepsilon}}, \quad z\in \Sigma_{\phi}.$$
If $A$ is sectorial, then for all $\phi_A<\phi'<\phi<\pi$ and
$f\in H_0^\infty(\Sigma_{\phi})$ we can define an operator $f(-A)$ in $\calL(X)$ by setting
$$ f(-A) = \frac1{2\pi i}\int_{\partial \Sigma_{\phi'}} f(z) (z+A)^{-1}\,dz.$$
We say that $-A$ has a {\em  bounded $H^\infty$-calculus} if there is a constant $C_A\ge 0$ and an angle
$\phi>\phi_A$ such that for all $f\in H_0^\infty(\Sigma_{\phi})$ we have
$$ \n f(-A)\n \le C_A\,\n f\n_{H^\infty(\Sigma_\phi)}.$$
We work here with  $-A$ instead of $A$ to be in accordance with \cite{Pruss93}, where dissipative
operators instead of accretive operators are used.

Clearly, every self-adjoint operator of negative type has a bounded $H^\infty$-calculus.
We next give several examples of more general situations.

\begin{example}[Dissipative operators]
Let $(A, D(A))$ be linear and injective such that  $I-A$ is invertible.
Assume that $A$ is $\phi$-dissipative for some $\phi\in [0,\pi/2]$; i.e.,
\[|\arg(\lb A x, x\rb)|\geq \pi-\phi, \qquad x\in D(A).\]
It is well known that then $A$ is sectorial with $\phi_A\leq \phi$. Moreover,
$-A$ has a bounded $H^\infty$-calculus by e.g.\ Theorem~11.5 in \cite{KunWeis04}.

A special case of this situation are
normal operators with spectrum in $\C\setminus\Sigma_{\pi-\phi}$.
\end{example}

\begin{example}[BIP]
Let $(A, D(A))$ be a sectorial operator which has
 bounded imaginary powers (BIP); i.e., $(-A)^{is}\in \calL(X)$ for all $s\in \R$.
Then $-A$ has a bounded $H^\infty$-calculus,  see e.g.\ Theorem~11.9  in \cite{KunWeis04}. We recall that
$-A$ has bounded imaginary powers if and only if $D((-A)^{\theta}) = D_A(\theta, 2)$ for some $\theta\in (0,1)$,
where the  latter is the real interpolation space between $X$ and $D(A)$, see
Sections~6.6.3 and 6.6.4 in \cite{Haase:2}.
\end{example}

\begin{example}[Elliptic operators]
Let $A = \sum_{m,n=1}^d a_{mn} \partial_{m}\partial_n + \sum_{n=1}^d b_n \partial_n+ c$ with coefficients
 $b_n,c\in L^\infty(\R^d)$ and $a_{mn}\in C^{\varepsilon}_b(\R^d)$ for some $\varepsilon>0$.
For some  $\phi\in (0,\pi]$ we assume that
\[\sum_{m,n=1}^N a_{mn}(x)\xi_n \xi_m \in \{\lambda\neq 0: |\arg(\lambda)|\geq \phi\},
            \qquad \xi\in \R^d\setminus\{0\}, x\in \R^d.\]
Then for all $\phi'>\phi$ there exists a $w$ such that $A-w$ is sectorial of angle $\phi'$ and $-(A-w)$ has a
bounded $H^\infty$-calculus, see e.g.\ Theorem~13.13 in \cite{KunWeis04}.
\end{example}

The above list is far from exhaustive. For other results on operators with a
bounded $H^\infty$-calculus we refer the reader to \cite{CDMY96, DenketalSeeley, DuSim}
and to \cite{KATOproblem} for connections to the famous Kato square root problem.

\section{The main result}

Let $X$ be a separable Hilbert space and $(\Omega,\mathcal{A},\P)$ be a complete probability space
with a filtration $(\mathcal{F}_{t})_{t\geq 0}$ satisfying the usual conditions (see \cite{Kal}).
We assume that $L$ is an $X$-valued local $L^2$-martingale with c\`adl\`ag paths almost surely, that the kernel $a$
belongs to  $L^1_{\rm loc}(\R_+)$, and that $A$ is a closed operator on $X$ with dense domain $D(A)$.
 We study the stochastic Volterra equation
\begin{equation}\label{eq:Volterra2}
u(t) = u_0 + \int_0^t a(t-s) Au(s)\, ds + L(t), \ \ \ t\geq 0,
\end{equation}
for an  $\mathcal{F}_0$-measurable initial function $u_0:\Omega\to X$. We may assume that
$L(0)=0$ as we could replace $u_0$ by $u_0+L(0)$.

In Theorem~\ref{thm:main}  we present the main result on the existence of solutions with regular paths.
Several classes of admissible kernels $a$ will be discussed in Section~\ref{subsec:aex}. Finally, in
Section~\ref{subsec:illustrate} we illustrate how the results of Sections~\ref{subsec:funccalc} and
\ref{subsec:aex} can be combined with Theorem~\ref{thm:main} to obtain path properties of solutions.

Before we move to the main result we first give the definition of a weak solution to \eqref{eq:Volterra2}
and we show a simple but useful lemma about shifting the operator $A$.
\begin{definition}
A measurable process $u:\R_+\times\Omega\to X$  is called a \emph{weak solution} to \eqref{eq:Volterra2} if almost
all paths of $u$  belong to $L^1_{\rm loc}(\R_+;X)$ and if for all $x^*\in D(A^*)$ and $t\in [0,\infty)$
we have  almost surely
\[\lb u(t), x^*\rb = \lb u_0, x^*\rb  + \int_0^t a(t-s) \lb u(s), A^*x^*\rb \, ds + \lb L(t), x^*\rb.\]
\end{definition}
Assume that the resolvent for \eqref{eq:volterraDet} exists. Proposition~2 of \cite{PZ14} then says that there is a
unique weak solution $u$ of \eqref{eq:Volterra2} given by
\begin{equation}\label{eq:mild}
 u(t) = S(t) u_0  + \int_0^t S(t-s) \, dL(s).
\end{equation}
Here the stochastic integral exists since $S$ is strongly continuous
and $L$ is a c\`adl\`ag local $L^2$-martingale
with values in the Hilbert space $X$
(see Sections 14.5 and 14.6 in \cite{MePe}, Sections 2.2 and 2.3 in \cite{Rozo}, and \cite[Chapter 26]{Kal} for the scalar case).

We start with a simple but useful lemma which allows us to replace the operator $A$ by $A-\rho$ for any
$\rho\in \C$. In the applications of Theorem~\ref{thm:main} this is quite essential since one can usually
check the boundedness of the $H^\infty$-calculus only for $A-\rho$ with large $\rho\ge0$.

\begin{lemma}\label{lem:Atransl}
Assume $a\in L^1_{\rm loc}(\R_+)$ and $\rho\in \C$. Let $s\in L^1_{\rm loc}(\R_+)$ solve $s  -\rho a*s = a$.
Then  problem \eqref{eq:Volterra2} has a weak solution with
c\`adl\`ag/continuous paths almost surely if and only if
\eqref{eq:Volterra2} with $(a,A)$ replaced by $(s,A-\rho)$ has a weak solution with c\`adl\`ag/continuous paths.
\end{lemma}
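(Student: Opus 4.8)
The natural approach is to pass between the two Volterra equations through the scalar resolvent kernel relating $(a,A)$ and $(s,A-\rho)$. First I would record the algebraic identity that underlies everything: if $s\in L^1_{\rm loc}(\R_+)$ solves $s-\rho\, a*s=a$, then equivalently $a = s - \rho\, s*a$ as well (since convolution is commutative and one can solve for $a$ in terms of $s$ symmetrically), and hence formally ``$a(A\,\cdot\,) = s((A-\rho)\,\cdot\,)$'' at the level of convolution operators. Concretely, I would show that for any measurable $u\in L^1_{\rm loc}(\R_+;X)$ and any $x^*\in D(A^*)=D((A-\rho)^*)$,
\[
\int_0^t a(t-\tau)\lb u(\tau),A^*x^*\rb\,d\tau
= \int_0^t s(t-\tau)\lb u(\tau),(A-\rho)^*x^*\rb\,d\tau + \Big(\text{terms from }\rho\Big),
\]
the point being that $a*\lb u,A^*x^*\rb = a*\lb u, (A-\rho)^*x^*\rb + \rho\, a*\lb u,x^*\rb$, and then using $a = s-\rho\,s*a$ together with Fubini to re-express $\rho\, a*\lb u,x^*\rb$ in terms of $s$.

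Second, I would carry out the actual equivalence of weak solutions. Suppose $u$ is a weak solution of \eqref{eq:Volterra2} for $(a,A)$. Testing against $x^*\in D(A^*)$ gives $\lb u(t),x^*\rb = \lb u_0,x^*\rb + (a*\lb u,A^*x^*\rb)(t) + \lb L(t),x^*\rb$. I want to show the same $u$ (with possibly the same $u_0$, and the same $L$) solves the equation for $(s,A-\rho)$. Write $A^* = (A-\rho)^* + \bar\rho$, so $a*\lb u,A^*x^*\rb = a*\lb u,(A-\rho)^*x^*\rb + a*\big(\rho\lb u,x^*\rb\big)$ — here I am being slightly cavalier about $\rho$ versus $\bar\rho$ in the pairing; in the real formulation used by the paper this is just $\rho$. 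Now I substitute $a = s - \rho\, s*a$ in the first term only, getting $a*\lb u,(A-\rho)^*x^*\rb = s*\lb u,(A-\rho)^*x^*\rb - \rho\, s*\big(a*\lb u,(A-\rho)^*x^*\rb\big)$; but $a*\lb u,(A-\rho)^*x^*\rb = a*\lb u,A^*x^*\rb - \rho\,a*\lb u,x^*\rb = \big(\lb u,x^*\rb - \lb u_0,x^*\rb\one - \lb L,x^*\rb\big) - \rho\, a*\lb u,x^*\rb$, using the equation itself. Collecting the convolutions with $s$ and again using $a = s-\rho\,s*a$ to eliminate the remaining $a$'s, everything should telescope to exactly $\lb u(t),x^*\rb = \lb u_0,x^*\rb + s*\lb u,(A-\rho)^*x^*\rb(t) + \lb L(t),x^*\rb$. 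The reverse implication is symmetric once one notes that the relation $s-\rho\,a*s=a$ can be inverted: $a$ is the kernel obtained from $s$ with $\rho$ replaced by $-\rho$ in the analogous scalar equation, i.e. $a + \rho\, s*a = s$, which is precisely $a = s-\rho\, s*a$.

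Third, I would check the side conditions. Existence and uniqueness of $s\in L^1_{\rm loc}(\R_+)$ solving $s-\rho\,a*s=a$ is standard scalar Volterra theory (Neumann series / resolvent kernel, as in \cite{Pruss93}), so I may invoke it; in particular $s\in L^1_{\rm loc}(\R_+)$, which is what is needed to even state equation \eqref{eq:Volterra2} for $(s,A-\rho)$ and to apply the Definition of weak solution. Since the process $u$ is literally unchanged (same paths, same exceptional sets), the path regularity — c\`adl\`ag or continuous — is trivially preserved in both directions; likewise $u$'s paths lie in $L^1_{\rm loc}(\R_+;X)$ in one formulation iff they do in the other, as this condition does not involve $a$ or $A$. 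The measurability of $u:\R_+\times\Omega\to X$ and the $\mathcal F_0$-measurability of $u_0$ are also untouched, and $L$ is the same local $L^2$-martingale.

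The only real obstacle is bookkeeping: making the chain of convolution substitutions in the second step rigorous requires justifying Fubini's theorem for the iterated integrals (everything is in $L^1_{\rm loc}$ and the paths of $u$ are in $L^1_{\rm loc}(\R_+;X)$ a.s., with $\lb u,A^*x^*\rb$, $\lb u, x^*\rb$, $\lb L,x^*\rb$ all in $L^1_{\rm loc}(\R_+)$ a.s., so the hypotheses hold for a.e.\ $\om$), and keeping track of which terms cancel. I expect this to reduce, after expansion, to the single scalar identity $a = s - \rho\, a*s$ applied twice; no functional-calculus or probabilistic input is needed beyond the already-cited Proposition~2 of \cite{PZ14} for the representation \eqref{eq:mild}, which in fact one does not even need here since the argument is entirely at the level of the weak formulation.
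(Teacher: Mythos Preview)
Your central claim---that the \emph{same} process $u$ solves both the $(a,A)$ equation and the $(s,A-\rho)$ equation with the same data $u_0,L$---is false, and the telescoping you anticipate does not happen. If one carries out exactly the substitutions you describe (split $A^*=(A-\rho)^*+\rho$ for real $\rho$, insert $a=s-\rho\,s*a$, and use the $(a,A)$ equation once to replace $a*\lb u,(A-\rho)^*x^*\rb$), the terms involving $s*\lb u,x^*\rb$ do cancel, but one is left with
\[
\lb u(t),x^*\rb = \lb u_0,x^*\rb + \big(s*\lb u,(A-\rho)^*x^*\rb\big)(t) + \lb L(t),x^*\rb + \rho\big(s*(\lb u_0,x^*\rb+\lb L,x^*\rb)\big)(t),
\]
and the last term does not vanish. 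A concrete check: take $a\equiv 1$ and $A=\rho I$, so that $s(t)=e^{\rho t}$ and $A-\rho=0$. The $(s,A-\rho)$ equation then reads $v=u_0+L$, whereas the solution of the $(a,A)$ equation is $u(t)=e^{\rho t}u_0 + L(t) + \rho\int_0^t e^{\rho(t-\tau)}L(\tau)\,d\tau$, which differs from $v$ as soon as $\rho\neq0$. Consequently your third paragraph (``the process $u$ is literally unchanged, so path regularity is trivially preserved'') is built on a false premise.

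The paper does not keep the solution fixed. Given a weak solution $v$ of the $(s,A-\rho)$ problem, it defines $u:=v+\rho\,s*v$; the scalar identity $s-\rho\,a*s=a$ immediately yields $a*u=a*v+\rho\,(a*s)*v=(a+\rho\,a*s)*v=s*v$, and then the weak $(a,A)$ equation for $u$ follows in two lines from the weak $(s,A-\rho)$ equation for $v$. Path regularity transfers because the convolution of $s\in L^1_{\rm loc}(\R_+)$ with a locally bounded c\`adl\`ag (or continuous) function is continuous. Your Fubini/$L^1_{\rm loc}$ bookkeeping and the symmetry observation $a+\rho\,s*a=s$ (for the converse direction) are correct and are exactly what is used; what is missing is this change of unknown $u=v+\rho\,s*v$.
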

It is well known that there is a unique function $s\in L^1_{\rm loc}(\R_+)$ with $s - \rho a*s = a$, see
Theorem~2.3.5 in \cite{GLS}.

\begin{proof}
Assume that \eqref{eq:Volterra2} with $(a,A)$ replaced by $(s,A-\rho)$ has a weak solution $v$ with
c\`adl\`ag/continuous paths. Set $u = v + \rho s*v$. The paths of $u$ inherit the c\`adl\`ag/continuous properties
of the paths of $v$ since $f*g\in C_b(\R)$ if $f\in L^1(\R)$ and $g\in L^\infty(\R)$. (The latter fact can be
proved approximating $f$ by continuous functions.) Moreover, the above identities yield $s*v = a*u$. Using also
that $v$ is a weak solution, we compute
\begin{align*}
 \lb u(t), x^*\rb &= \lb v(t), x^*\rb +  \rho  s*\lb v(t), x^*\rb
  =  \lb u_0, x^*\rb  + s* \lb v(t), A^* x^*\rb   + \lb L(t), x^*\rb\\
& = \lb u_0, x^*\rb  + a*\lb u(t), A^* x^*\rb   + \lb L(t), x^*\rb
\end{align*}
for all $x^*\in D(A^*)$. Hence, $u$ is a weak solution to \eqref{eq:Volterra2}.
The converse implication can be proved in a similar way.
\end{proof}

Our main result is the following sufficient condition for the existence and uniqueness of a solution
which has c\`adl\`ag/continuous paths. We write $\hat{a}$ for the Laplace transform of $a$.
\begin{theorem}\label{thm:main}
Assume the following conditions.
\begin{enumerate}[\rm (1)]
\item There is a number $\rho\in \R$ such that $A-\rho$ is a sectorial operator of angle $\phi_{A-\rho}<\pi/2$
and $-(A-\rho)$ has a bounded $H^\infty$-calculus.
\item $a\in L^1_{\rm loc}(\R_+)$ and $t\mapsto e^{-w_0 t}a(t)$ is integrable on $\R_+$ for some $w_0\in \R$.
\item\label{it:conditionimportant} There exist constants $\sigma,\phi, c>0$ and $w\in \R$ such that
  $\sigma + \phi_{A-\rho}<\pi/2$, $\phi>\phi_{A-\rho}$, $\hat{a}$ is holomorphic on
  $\{\lambda\in\C: \Re(\lambda)>w\}$, and for all $\lambda\in \C$ with  $\Re(\lambda)>w$
we have
\begin{enumerate}[\rm (i)]
\item $\lambda \hat{a}(\lambda)\in \Sigma_{\sigma}$ \ and \ $\hat{a}(\lambda)\in \Sigma_{\pi-\phi}$,
\item $|\lambda \hat{a}'(\lambda)|\leq c\, |\hat{a}(\lambda)|$.
\end{enumerate}
\end{enumerate}
Let $u_0:\Omega\to X$ be $\mathcal{F}_0$-measurable.
Then  \eqref{eq:volterraDet} possesses a resolvent $S$, the stochastic problem \eqref{eq:Volterra2}
has a unique weak solution $u$ given by
\[u(t) = S(t) u_0 + \int_0^t S(t-s) \, dL(s), \qquad t\ge0, \]
and $u$ has a modification with c\`adl\`ag/continuous trajectories almost surely
whenever the local $L^2$-martingale $L$ has c\`adl\`ag/continuous paths almost surely.
\end{theorem}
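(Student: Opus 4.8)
The plan is to build the resolvent $S$ for the deterministic equation \eqref{eq:volterraDet} by means of the $H^\infty$-calculus, verify that it is positive definite, and then invoke the dilation argument of \cite{PZ14} together with the deterministic Volterra theory of \cite{Pruss93}. By Lemma~\ref{lem:Atransl} it suffices to prove the statement with $(a,A)$ replaced by $(s,A-\rho)$, where $s-\rho a*s=a$; the point of this reduction is that $A-\rho$ is sectorial of angle $<\pi/2$ with a bounded $H^\infty$-calculus by assumption~(1), while hypothesis~(3) transfers to the new kernel $s$ since $\hat s = \hat a/(1-\rho\hat a)$ and the sector/derivative conditions (i)--(ii) are stable under this Möbius-type transformation (one checks the bounds on a half-plane $\Re\lambda>w'$, possibly enlarging $w$). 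So from now on I may assume $\rho=0$, i.e. $A$ itself is sectorial of angle $\phi_A<\pi/2$ with a bounded $H^\infty$-calculus and the kernel $a$ satisfies (2)--(3).

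First I would construct $S$. For each fixed $\lambda$ in the sector $\Sigma_{\pi-\phi}$ (the closure of the numerical range of $-A$ shifted appropriately) consider the scalar Volterra equation $s_\lambda = 1 + \lambda\,\hat{\cdot}\,$ — more precisely, its Laplace transform is $\hat s_\lambda(\mu) = \frac{1}{\mu}\bigl(1-\lambda\hat a(\mu)\bigr)^{-1}$, following Section~I.1 of \cite{Pruss93}. Condition~(3)(i), namely $\lambda\hat a(\mu)\in\Sigma_\sigma$ with $\sigma+\phi_A<\pi/2$, guarantees that $1-z\hat a(\mu)$ stays away from $0$ uniformly for $\mu$ in a half-plane and $z=-\zeta$, $\zeta\in\sigma(A)$; this yields a holomorphic family $z\mapsto H(z,\mu)$ on a sector $\Sigma_{\phi''}$ (with $\phi_A<\phi''<\phi$) with $|H(z,\mu)|\le C/|\mu|$, and condition~(3)(ii) provides the corresponding bound on $\partial_\mu H$, which via Laplace inversion (Theorem~0.2 / Proposition~0.1-type estimates in \cite{Pruss93}) gives that $t\mapsto H(z,\cdot)$ has an inverse Laplace transform $s(t,z)$ with $\sup_{z\in\Sigma_{\phi''}}|s(t,z)|$ locally bounded and continuous in $t$. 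Uniform boundedness of the family $\{s(t,\cdot)\}_{t\in[0,T]}$ in $H^\infty(\Sigma_{\phi''})$ is exactly what is needed to define $S(t) := s(t,-A)$ through the $H^\infty$-calculus; strong continuity, $S(0)=I$, commutation with $A$, and the resolvent equation then follow by applying the calculus to the scalar identities $s(0,z)=1$ and $s(t,z) = 1 + z\int_0^t a(t-r)s(r,z)\,dr$. Uniqueness of the resolvent is Corollary~1.1 of \cite{Pruss93}, and then formula \eqref{eq:mild} from Proposition~2 of \cite{PZ14} gives the unique weak solution $u(t)=S(t)u_0+\int_0^t S(t-r)\,dL(r)$.

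Second I would establish that $(S(t))_{t\ge0}$ is positive definite in the operator sense, i.e. $\sum_{j,k} \lb S(t_j-t_k)x_j, x_k\rb \ge 0$ for $t_1<\cdots<t_n$ and $x_1,\dots,x_n\in X$ (equivalently $\Re\int_0^T \lb S*g(t), g(t)\rb\,dt \ge 0$ for test functions $g$). By condition~(3) the scalar symbols $s(t,z)$ are, for each $z$ in the relevant sector, positive definite kernels on $\R_+$ — this is the scalar counterpart, proved via the sector location of $\hat s_\lambda$ and a Bochner/Herglotz-type argument as in \cite{PZ14}; I would then lift this to the operator level using the $H^\infty$-calculus (or the subordination/functional-calculus representation $S(t) = \int s(t,z)\,dE$ heuristically, made rigorous by the calculus applied to suitable integrands). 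At this point the dilation theorem for positive definite operator families on a Hilbert space — stated in \cite{PZ14} and complemented by the dilation result of \cite{KunWeis04} used to handle the $H^\infty$-calculus hypothesis — provides a larger Hilbert space $\tilde X\supseteq X$, a unitary $C_0$-group (or a contractive family) $U(t)$ on $\tilde X$, and bounded maps $Q:\tilde X\to X$, $J:X\to\tilde X$ with $S(t) = Q\,U(t)\,J$. Substituting this into \eqref{eq:mild} turns the stochastic convolution into $Q\int_0^t U(t-r)\,d(JL)(r)$; the convolution with a unitary group has a continuous (resp. càdlàg) modification whenever the integrator does, by the standard semigroup/stochastic-convolution regularity results cited for $a=1$ (\cite{HausSei08}, and Sections 14.5--14.6 of \cite{MePe}). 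Applying the bounded operator $Q$ and, if needed, undoing the reduction of Lemma~\ref{lem:Atransl} (where the convolution $v\mapsto v+\rho s*v$ preserves càdlàg/continuity since $f*g\in C_b$ for $f\in L^1$, $g\in L^\infty$) then transfers the path regularity of $L$ to $u$.

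The main obstacle I anticipate is the construction and uniform control of the scalar resolvent family $s(t,z)$: one must extract from the frequency-domain hypotheses (3)(i)--(ii) — sector membership of $\lambda\hat a(\lambda)$ and $\hat a(\lambda)$, plus the logarithmic-derivative bound $|\lambda\hat a'(\lambda)|\le c|\hat a(\lambda)|$ — estimates that are uniform in the spatial parameter $z$ over an entire sector $\Sigma_{\phi''}$, both for $s(t,\cdot)$ (needed to feed the $H^\infty$-calculus) and for its positive-definiteness, and to do so after the kernel transformation $a\rightsquigarrow s$ of Lemma~\ref{lem:Atransl}. The rescaling bookkeeping (keeping track of the shift $\rho$, the angle constraints $\sigma+\phi_{A-\rho}<\pi/2$ and $\phi>\phi_{A-\rho}$, and the half-plane $\Re\lambda>w$ throughout) is the technically delicate part; the dilation and stochastic-convolution steps are then essentially quotations of \cite{PZ14,KunWeis04,HausSei08}.
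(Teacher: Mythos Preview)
Your overall architecture matches the paper's: reduce to $\rho=0$ via Lemma~\ref{lem:Atransl}, build the resolvent $S(t)=s(t,-A)$ from scalar solutions through the $H^\infty$-calculus, establish positive definiteness, and conclude by the dilation argument behind Proposition~\ref{prop:PZcond}. But one step is a genuine gap: the passage from scalar positive definiteness of $t\mapsto s(t,z)$ (for each $z$ in a sector) to operator positive definiteness of $t\mapsto S(t)=s(t,-A)$.

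You write that this lifting is done ``using the $H^\infty$-calculus (or the subordination/functional-calculus representation $S(t)=\int s(t,z)\,dE$ heuristically).'' This is precisely where the argument breaks. There is no spectral measure $dE$ for a non-normal sectorial operator; the $H^\infty$-calculus is realized by a contour integral $s(t,-A)=\tfrac{1}{2\pi i}\int_{\partial\Sigma} s(t,z)(z+A)^{-1}\,dz$, and the resolvents $(z+A)^{-1}$ along the contour are neither positive nor self-adjoint, so positive definiteness of the scalar symbol does not pass through. The paper resolves this by invoking the dilation theorem of \cite{KunWeis04} at an \emph{earlier} stage and for a \emph{different} purpose than you indicate: it first dilates $-A$ itself to the multiplication operator $Mf(\tau)=-(i\tau)^\alpha f(\tau)$ on $L^2(\R;X)$, obtaining the concrete identity $f(-A)=J^*f(-M)J$ for every $f\in H^\infty$. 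This yields $S_{w,A}(t)=J^*S_{w,M}(t)J$, and since $S_{w,M}(t)$ is multiplication by $\tau\mapsto s_{w,(i\tau)^\alpha}(t)$, operator positive definiteness of $\tilde S_{w,A}$ reduces pointwise in $\tau$ to scalar positive definiteness of $\tilde s_{w,(i\tau)^\alpha}$, which is checked via Bochner's theorem using the sector condition $\sigma+\beta<\pi/2$. Thus two dilations are in play, with distinct roles: the \cite{KunWeis04} dilation of $-A$ is what makes the scalar-to-operator lifting of positive definiteness possible, and only afterwards does the Na{\u\i}mark dilation (inside Proposition~\ref{prop:PZcond}) handle the stochastic convolution. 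Your plan places the \cite{KunWeis04} result at the second of these steps, leaving the first unjustified. A smaller point in the same vein: positive definiteness is established for the exponentially damped family $e^{-w|t|}S(t)$, not for $S$ itself; the weight is needed for the Fourier computation to converge and is exactly what Proposition~\ref{prop:PZcond} accommodates.
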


As announced the required existence of an $H^\infty$-calculus plays a crucial role in our approach.
The second sector condition in \eqref{it:conditionimportant}(i) and the assumption of \emph{1-regularity}
in \eqref{it:conditionimportant}(ii) are quite common in the theory of Volterra equations of parabolic type,
see Chapters~3 and 8 of \cite{Pruss93}. The first condition in \eqref{it:conditionimportant}(i) is needed
to derive the positive definiteness of the resolvent, as defined next.

The proof of Theorem~\ref{thm:main} relies on the following result which is a slight variation of Proposition~3 in
\cite{PZ14}. Before stating this result, we recall that a family of operators $(R(t))_{t\in \R}$ on $X$ is called
{\em positive definite} if $R(t) = R(-t)^*$  and
\[\sum_{m,n=1}^N \lb R(t_n-t_m)x_m, x_n\rb\geq 0.\]
for all $t,t_1, \ldots, t_N\in \R$, $x_1, \ldots, x_N\in X$ and $N\in\N$.

\begin{proposition}\label{prop:PZcond}
Assume that $(R(t))_{t\in \R}$ is a strongly continuous family of operators on $X$ such that $R(0) = I$ and
the family  $e^{-w |t|}R(t)$ is positive definite for some $w\in \R$.
If $L$ is a c\`adl\`ag (or continuous) local $L^2$-martingale with values in $X$, then the process
\[u(t) = R(t) u_0 + \int_0^t R(t-s) \, dL(s), \qquad t\ge0, \]
is c\`adl\`ag (or continuous) as well.
\end{proposition}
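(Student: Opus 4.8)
The plan is to reduce the claim to the classical dilation theorem for positive definite operator families on a Hilbert space. The key observation is that, because $e^{-w|t|}R(t)$ is positive definite and strongly continuous with $R(0)=I$, the Sz.-Nagy--type dilation theorem (see \cite{KunWeis04} or the references in \cite{PZ14}) provides a larger Hilbert space $\tilde X$, an isometric embedding $J\colon X\to\tilde X$, a bounded projection or coisometry $Q\colon\tilde X\to X$, and a $C_0$-group $(U(t))_{t\in\R}$ of operators on $\tilde X$ with $\|U(t)\|\le M e^{w|t|}$ such that $R(t) = Q U(t) J$ for all $t\in\R$. Here I would be careful to quote the exact version that handles the $e^{-w|t|}$ weight by rescaling, i.e. writing $R(t) = e^{w|t|}\,(e^{-w|t|}R(t))$ and absorbing the exponential into the growth bound of the group; this is the standard device and matches the ``rescaling arguments'' emphasized in the introduction.

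Having the dilation, the second step is to transfer the stochastic convolution. I would define $\tilde L(t) = J L(t)$, which is again a c\`adl\`ag (or continuous) local $L^2$-martingale, now with values in $\tilde X$, since $J$ is a bounded linear operator. Then
\[
u(t) = R(t)u_0 + \int_0^t R(t-s)\,dL(s) = Q\Big( U(t) J u_0 + \int_0^t U(t-s)\,d\tilde L(s)\Big),
\]
where pulling $Q$ out of the stochastic integral is justified because $Q$ is a fixed bounded operator (and one checks the integrands agree after applying $Q$, using $R(t-s) = Q U(t-s) J$). Thus it suffices to show that $t\mapsto v(t) := U(t) J u_0 + \int_0^t U(t-s)\,d\tilde L(s)$ has c\`adl\`ag (resp.\ continuous) paths, since $Q$ is bounded and therefore preserves these path regularities.

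The third step handles $v$. Since $(U(t))_{t\in\R}$ is a $C_0$-group, the factor $U(t)$ can be pulled inside: writing $v(t) = U(t)\big(J u_0 + \int_0^t U(-s)\,d\tilde L(s)\big)$, I would split the analysis into the two factors. The inner process $w(t) := J u_0 + \int_0^t U(-s)\,d\tilde L(s)$ is a stochastic integral of the strongly continuous, locally bounded integrand $s\mapsto U(-s)$ against a c\`adl\`ag/continuous local $L^2$-martingale, hence it is itself c\`adl\`ag/continuous (indeed it is again a local $L^2$-martingale up to the deterministic shift by $Ju_0$); this is the basic regularity of Hilbert-space stochastic integrals cited after \eqref{eq:mild}. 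Then $v(t) = U(t) w(t)$, and since $(t,x)\mapsto U(t)x$ is jointly continuous on $\R\times\tilde X$ (strong continuity of a $C_0$-group plus local boundedness, via an $\varepsilon/3$ argument), the composition $t\mapsto U(t)w(t)$ inherits the c\`adl\`ag/continuous property from $w$. Composing with $Q$ and recalling the display above gives the claim for $u$.

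The main obstacle I anticipate is the rigorous justification of the two ``pull the operator through the stochastic integral'' manipulations and, relatedly, getting the dilation statement in exactly the form needed: one must ensure the group version of the dilation (with a two-sided group and the correct exponential growth) applies to a merely strongly continuous positive definite family, and one must verify that the identity $\int_0^t R(t-s)\,dL(s) = Q\int_0^t U(t-s)\,d\tilde L(s)$ holds as an equality of processes (not just in distribution), which follows from the fact that $R(t-s) = QU(t-s)J$ pointwise together with the linearity and continuity properties of the Hilbert-space It\^o integral with respect to bounded operators acting on the integrand and on the integrator. Once the bookkeeping for the dilation and these commutation identities is in place, the path-regularity transfer is routine.
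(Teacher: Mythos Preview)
Your proposal is correct and follows essentially the same approach as the paper: the paper's proof sketch invokes Na{\u\i}mark's dilation theorem (Theorem~I.7.1 in \cite{Nagy10}) to dilate the positive definite family $e^{-w|t|}R(t)$ to a strongly continuous unitary group, and then appeals to the argument of Hausenblas--Seidler \cite{HauSei01,HausSei08}, which is exactly the factorization $v(t)=U(t)\big(Ju_0+\int_0^t U(-s)\,d\tilde L(s)\big)$ you spell out. One small cleanup: Na{\u\i}mark's theorem yields a \emph{unitary} group $V(t)$ with $e^{-w|t|}R(t)=QV(t)J$, and since $e^{w|t|}$ is not multiplicative you should set $U(t)=e^{wt}V(t)$ (a genuine $C_0$-group) so that $R(t)=QU(t)J$ holds for $t\ge 0$, which is all that is needed in the convolution.
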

The proof of this fact uses that  the family $R$ has a dilation to a
strongly continuous group by Na{\u\i}mark's theorem (see Theorem~I.7.1 in \cite{Nagy10})
and an  argument from \cite{HauSei01,HausSei08}.

\begin{remark}
Proposition \ref{prop:PZcond} can be extended to larger classes of integrators $L:[0,\infty)\times \Omega\to X$.
The only properties needed in the proof are:
\begin{enumerate}
\item For every strongly continuous $f:\R_+\to \calL(X)$, the stochastic integral process $\int_0^\cdot f(s) \, dL(s)$ exists
and almost all paths are c\`adl\`ag (or continuous).
\item For any $B\in \calL(X)$, the following identity holds almost surely
\[B\int_0^t f(s) \, dL(s) = \int_0^t B f(s) \, dL(s), \ \ t\geq 0.\]
\end{enumerate}
\end{remark}

The proof of Theorem ~\ref{thm:main} will be divided into several steps. We first reduce the problem to the case
$\rho=0$. After that we will use the functional calculus to construct the resolvent and to show that it is positive
definite. The above proposition then implies the assertions.

\begin{proof}[Proof of Theorem \ref{thm:main}]
{\em Step 1: Reduction to $\rho=0$.}
By Lemma \ref{lem:Atransl}, it suffices to prove the result with $(a,A)$ replaced by $(s,A-\rho)$,
where $s\in L^1_{\rm loc}(\R_+)$ satisfies $s - \rho a*s = a$. Moreover, for  $\Re(\lambda)>w\ge w_0$, we have
\[|\hat{a}(\lambda)|\leq \int_0^\infty e^{-wt} |a(t)|\, dt\ \longrightarrow \ 0\]
as $w\to \infty$. Hence, for all sufficiently large $w$,  we can write
\[\hat{s}(\lambda) = \frac{\hat{a}(\lambda)}{1-\rho \hat{a}(\lambda)}, \ \ \  \ \Re(\lambda)>w.\]
 It is then  easy to check that also $s$ satisfies the assumptions of the theorem for a fixed
 (possibly larger) $w\ge0$, but one may have to  increase $\sigma$ and decrease $\phi$ a bit.
 In the following we can thus assume that $\rho=0$ and write $A$ instead of $A-\rho$.

\smallskip
{\em Step 2: Construction of the resolvent.}
Choose $\beta\in (\phi_A,\phi)$ such that $\beta+\sigma<\pi/2$. Let $\alpha = \beta \frac{2}{\pi}$.
It follows from Theorem~11.14 of \cite{KunWeis04} that $-A$ has a dilation to a multiplication operator $M$
on $L^2(\R;X)$ given by
\[Mf(\tau) =  -(i\tau)^{\alpha} f(\tau), \qquad \tau\in \R.\]
This means that there exists an isometric embedding $J:X\to L^2(\R;X)$ such that $J J^*$ is an orthogonal
projection from $L^2(\R;X)$ onto $J(X)$ and $J^* J = I$ on $X$ and for all
$\psi>\beta$ and $f\in H^{\infty}(\Sigma_\psi)$ we have
\begin{align}\label{eq:resolventdil}
f(-A) = J^* f(-M) J.
\end{align}

Set $a_{w}(t)=e^{-w t} a(t)$ with $w\geq 0$ from Step 1.
For each $\mu\in \C$, let $s_{w,\mu}$ be the unique solution to the equation
\begin{align}\label{eq:stau}
s_{w,\mu}(t) = e^{-wt} - \mu a_w*s_{w,\mu}(t).
\end{align}
The function $s_{w,\mu}$ is continuous. (See Theorems~2.3.1 and 2.3.5 of \cite{GLS}.)
We want to check that $\mu\mapsto s_{w,\mu}(t)$ belongs to $H^\infty(\Sigma_\psi)$ for $\psi\in(\beta,\phi)$.
We first  show the holomorphy of the map $\varphi_{w,t}:\mu\mapsto s_{w,\mu}(t)$  on $\C$ for fixed $t\geq 0$.

To this aim, take $\mu_0\in \C$ and $\varepsilon>0$. Set $B = \{\mu\in \C:|\mu-\mu_0|<\varepsilon\}$.
It enough to prove that $\mu\mapsto s_{\tilde{w},\mu}(t)$ is holomorphic on $B$ for a sufficiently large
$\tilde{w}\ge0$. Indeed, the uniqueness of \eqref{eq:stau} yields
$s_{w,\mu}(t) = e^{(\tilde{w}-w)t} s_{\tilde{w},\mu}(t)$band thus $\varphi_{w,t}$
 will also be holomorphic on $B$. Since $B$ is arbitrary, the holomorphy of $\varphi_{w,t}$ on $\C$ will then follow.
Take now $\tilde{w}$ such that $|\mu_0+\varepsilon|\, \| a_{\tilde{w}}\|_{L^1(\R_+)}<1$. By the proof of
Theorem~2.3.1 of \cite{GLS} the function $r_{\mu} = \sum_{j=1}^\infty (-1)^{j-1} (\mu a_{\tilde{w}})^{*j}$
converges in $L^1(\R_+)$ uniformly for $\mu\in B$ and $r_{\mu}$ solves
$r_{\mu}+\mu a_{\tilde{w}}*r_{\mu} = \mu a_{\tilde{w}}$. Hence, the map $B\ni \mu\mapsto r_{\mu}\in L^1(\R_+)$ is
holomorphic. Theorem~2.3.5 of \cite{GLS} also yields that
\[s_{\tilde{w},\mu}(t) = e^{-\tilde{w}t} - \int_0^t r_\mu(t-\tau) e^{-\tilde{w}\tau}\, d\tau, \qquad t\in \R_+.\]
The right-hand side is holomorphic in $\mu\in B$ for each  $t\in \R_+$
because integration with respect to the measure $e^{-\tilde{w}\tau}\, d\tau$ is a bounded linear functional
on $L^1(\R_+)$.

We next claim that there exists a constant $C>0$ such that $|s_{w,\mu}(t)|\leq C$ for all $t\geq 0$ and $\mu\in
\Sigma_{\psi}$, where $\psi\in(\beta,\phi)$.
Thanks to Corollary~0.1 and (the proof of) Proposition~0.1 of \cite{Pruss93}
it suffices to find a constant $K$ independent of $\mu$ such that
\[|\lambda \hat{s}_{w,\mu}(\lambda)| + |\lambda^2 \hat{s}'_{w,\mu}(\lambda)|\leq K, \qquad  \lambda\in \C_+.\]
(These results at first give the bound  on $s_{w,\mu}(t)$ only for a.e.\ $t$, but  $s_{w,\mu}$ is continuous.)
Since $\hat{s}_{w,\mu}(\lambda) = \frac{1}{\lambda+w} \frac{1}{1+\mu\hat{a}(\lambda+w)}$ by  \eqref{eq:stau}, we can compute
\begin{align*}
|\lambda \hat{s}_{w,\mu}(\lambda)|
&= \Big|\frac{\lambda}{\lambda+w}\Big| \Big|\frac{1}{1+\mu\hat{a}(\lambda+w)} \Big|
   \leq \sup\big\{|1+z|^{-1}: z\in \Sigma_{\pi - (\phi-\psi)}\big\}=:M_1,\\
|\lambda^2 \hat{s}'_{w,\mu}(\lambda)| &=
\Big| \frac{\lambda^2}{(\lambda+w)^2}\Big| \Big|\frac{(1+\mu\hat{a}(\lambda+w))
   + (\lambda+w) \mu \hat{a}'(\lambda+w)}{(1+\mu\hat{a}(\lambda+w))^2}\Big| \\
& \leq \Big|\frac{1}{1+\mu\hat{a}(\lambda+w)}\Big|
      + \frac{c\,|\mu \hat{a}(\lambda+w)|}{|1+\mu\hat{a}(\lambda+w)|^2}\\
& \leq M_1 + \sup\Big\{\Big|\frac{c z}{(1+z)^2}\Big|: z\in \Sigma_{\pi - (\phi-\psi)}\Big\}=:M_1+M_2
\end{align*}
for all $\lambda\in \C_+$. Here we employed the second part of condition \eqref{it:conditionimportant}(i)
several times and \eqref{it:conditionimportant}(ii) in the penultimate estimate. The claim follows.

We conclude that the map $\mu\mapsto s_{w,\mu}(t)$ belongs to $H^\infty(\Sigma_{\psi})$ for each $t\ge0$. Using the
$H^\infty$-calculus of $-A$, we  define $S_{w,A}(t) = s_{w,-A}(t)$ in $\calL(X)$ with norm less or equal $C_A C$.
To relate these operators to the desired resolvent, we further let $S_{w,M}(t)$ be the (multiplication) operator
on $L^2(\R;X)$ which is given by the map $\mu\mapsto s_{w,\mu}(t)$ and the functional calculus of $-M$. The norm
of $S_{w,M}(t)$ is bounded by $C$. Since the maps  $s_{w,\mu}$ are continuous,
$S_{w,M}(t)f$ is continuous in $L^2(\R;X)$ for $t\ge0$ if $f$ is a simple function.
By density and uniform boundedness, we infer that
 $t\mapsto S_{w,M}(t)$ is strongly continuous. Equation \eqref{eq:resolventdil} further yields
 \begin{equation}\label{eq:SMA}
S_{w,A}(t) = J^* S_{w,M}(t) J, \qquad t\geq 0.
\end{equation}

This identity and the strong continuity of $S_{w,M}$ imply that  $S_{w,A}$ is strongly continuous.
The operators  $S_{w,A}(t)$ and $A$ commute on $D(A)$ by the functional calculus, see e.g.\
Theorem~2.3.3 in \cite{Haase:2}. To derive the resolvent equation, we observe
\[
S_{w,M}(t)g = e^{-wt}g + a_w*(M S_{w,M} g)(t)
\]
for $g\in L^2(\R;X)$ and $t\ge0$ due to the definition of $S_{w,M}$ and  \eqref{eq:stau}.
This identity for $g=nR(n,M)Jx$ with $x\in D(A)$ and equation \eqref{eq:resolventdil} then imply
\begin{align*}
S_{w,A}(t)nR(n,A) x  & = J^* S_{w,M}(t)nR(n,M)Jx\\
& = J^*e^{-wt}nR(n,M)Jx + J^* a_w*(S_{w,M}nM R(n,M)Jx)(t)\\
& = e^{-wt}nR(n,A)x + a_w*(S_{w,A}n AR(n,A) x)(t).
\end{align*}
Letting $n\to \infty$ and using that $A$ and $S_{w,A}(t)$ commute on $D(A)$, we find
\begin{align}\label{eq:resolveq}
S_{w,A}(t)x  =  e^{-wt}x + a_w*(S_{w,A}Ax)(t)  = e^{-wt}x +  a_w*(AS_{w,A} x)(t) \, ds
\end{align}
for $t\ge0$. One now easily sees that
$(e^{wt}S_{w,A}(t))_{t\geq 0}$ is the resolvent of \eqref{eq:volterraDet}.

\smallskip
{\em Step 3: Positive definiteness.}
Let $\tilde{S}_{w,A}$ be the extension of $S_{w,A}$ given by $\tilde{S}_{w,A}(t) = S_{w,A}(-t)^*$.
Analogously we extend $\tilde{S}_{w,M}$ and $\tilde{s}_{w,\mu}$ to functions on $\R$.
Fix $t_1, \ldots, t_N\geq0$ and $x_1, \ldots, x_N\in X$. Setting $f_n = J x_n$, we infer from
\eqref{eq:SMA} that
\begin{align*}
\sum_{m,n=1}^N \lb \tilde{S}_{w,A}(t_n-t_m)x_m, x_n\rb
  = \int_{\R} \sum_{m,n=1}^N \lb \tilde{s}_{w,(i\tau)^{\alpha}}(t_n-t_m)f_m(\tau), f_n(\tau)\rb \, d\tau.
\end{align*}
Therefore, for the positive definiteness of $S_{w,A}(t)$ is suffices to prove that
the function $\tilde{s}_{w,(i\tau)^{\alpha}}$ is positive definite for a.e.\ $\tau\in \R$.

By the easy direction of Bochner's characterization it is enough to check that
$\F(\tilde{s}_{w,(i\tau)^{\alpha}})(\xi)\geq 0$ for all $\xi\geq 0$, where $\F$ denotes the Fourier transform.
The Fourier transform of $\tilde{s}_{w,(i\tau)^{\alpha}}$ satisfies
\begin{align*}
\F(\tilde{s}_{w,(i\tau)^{\alpha}})(\xi) & = \int_0^{\infty} \tilde{s}_{w,(i\tau)^{\alpha}}(t) e^{-it \xi} \, dt
 + \int_{-\infty}^{0} \overline{\tilde{s}_{w,(i\tau)^{\alpha}}(-t)} e^{-it \xi} \, dt\\
 & = 2\Re \int_0^{\infty} s_{w,(i\tau)^{\alpha}}(t) e^{-it \xi} \, dt
 = 2 \Re(\widehat{s}_{w,(i\tau)^{\alpha}}(i\xi)).
\end{align*}
If we extend $s$ and $a$ by zero to $t<0$, equation \eqref{eq:stau} yields
\begin{align*}
\Re \widehat{s}_{w,(i\tau)^{\alpha}}(i\xi)
&= \Re\Big( (1+(i\tau)^{\alpha} \hat{a}(w+i\xi))^{-1} (w+i\xi)^{-1} \Big)\\
&= \Re\Big( (1+ |\tau|^\alpha e^{\pm i\alpha\pi/2} \hat{a}(w+i\xi))^{-1} (w+i\xi)^{-1} \Big),
\end{align*}
where  $\pm$ is the sign of $\tau\in\R$.
Clearly, $\Re(z^{-1}) \geq 0$ if and only if $\Re(z)\geq 0$.
The number $z_0=\hat{a}(w+i\xi)) (w+i\xi)$ belongs to $\Sigma_\sigma$
by assumption \eqref{it:conditionimportant}. Since $e^{\pm i\alpha\pi/2}=e^{\pm i\beta}$,
 the condition $\sigma+\beta<\pi/2$ implies that $e^{\pm i\alpha\pi/2}z_0$ has a nonnegative
real part. Hence, $\Re\widehat{s}_{w,(i\tau)^{\alpha}}(i\xi)$ is nonnegative as required.

\smallskip
{\em Step 4: Conclusion.}
Since the resolvent  $S(t) := e^{wt} S_{w,A}(t)$ exists, the solution $u$ of \eqref{eq:Volterra2} is given by
\eqref{eq:mild}. Now as $\tilde{S}_{w,A}$ is positive definite, Proposition \ref{prop:PZcond} shows that $u$
has a version with the required properties.
\end{proof}

\begin{remark}
In Theorem~1 of \cite{PZ14} it is assumed that $A$ is self-adjoint and nonpositive.
In this paper the crucial property of the kernel $a$ is the inequality
$\Re(\lambda \hat{a}(\lambda))\ge0$
for all $\lambda\in \C_+$ with $\Re(\lambda)\geq w$ for some $w$, which corresponds to
$\sigma=\pi/2$ in our theorem. This sharp case is needed for the kernel $a(t)=t$
which leads to second order Cauchy problems such as the wave equation.
We cannot treat this case since we have to slightly enlarge sectors when working
with the $H^\infty$-calculus instead of the functional calculus for self-adjoint operators.

However, if one wants to use our Lemma~\ref{lem:Atransl} to extend Theorem~1 of \cite{PZ14}
to self-adjoint operators with   $\lb Ax, x\rb\leq \rho\|x\|^2$ for $x\in D(A)$ and some
$\rho>0$, then one has to impose the slightly stronger sector condition $\lambda \hat{a}(\lambda)\in
\Sigma_{\frac{\pi}{2} - \varepsilon}$ for some $\varepsilon>0$. In fact, by Step~1 of our proof
the shifting procedure requires this extra angle.
\end{remark}

\begin{remark}
Let $H$ be a separable Hilbert space and let $W_H$ be a  cylindrical Brownian motion on $H$.
An important special case is given by  $u_0 = 0$ and
\[L(t) = \int_0^t g \, dW_H,\] where $g\in L^2_{\rm loc}(\R_+;\calL_2(H,X))$ a.s.\ is measurable and adapted.
This process $L$ is a continuous local martingale. If the conditions of
Theorem~\ref{thm:main} hold, then the solution $u$ has a version with continuous paths. Moreover,
the solution formula \eqref{eq:mild} and  the Burkholder-Davis-Gundy estimate imply that
\[\big(\E(\sup_{t\in [0,T]} \|u(t)\|^p)\big)^{1/p} \leq C \|g\|_{L^p(\Omega;L^2(0,T;\calL_2(H,X)))}\]
for every $T<\infty$ and $p\in (0, \infty)$,
whenever the righthand side is finite. (See also \cite{HauSei01,HausSei08}.)
Here $C$ is a constant independent of $g$.
In \cite{HauSei01,HausSei08} is has been shown how one can use this result to obtain results
on exponential integrability of $\sup_{t\in [0,T]} \|u(t)\|^2$, and their methods extend to our setting.
\end{remark}

\section{Applications}

\subsection{Examples of kernels $a$\label{subsec:aex}}
In this section we present examples of kernels which satisfy the conditions of Theorem \ref{thm:main}.
Examples of sectorial operators $A$ with an $H^\infty$-calculus
 have been given in Section \ref{subsec:funccalc}.
We start with the arguably most prominent class of scalar kernels $a$.

\begin{example}\label{ex:standarda}
Let $a:(0,\infty)\to \R$ be given by $a(t) = t^{\beta-1}/\Gamma(\beta)$ with $\beta\in (0,2)$. Assume that the
operator $A$ is sectorial with
\[\phi_{A-\rho}<\min\{\tfrac{\pi}{2}(2-\beta), \tfrac{\pi}{2}\beta\}\]
for some $\rho$ and that $-(A-\rho)$ has an $H^\infty$-calculus.
Then the conditions of Theorem~\ref{thm:main} are fulfilled with $w=0$.

To check this claim, let  $\lambda\in \C_+$. Since  $\hat{a}(\lambda) = \lambda^{-\beta}$, we can compute
$|\lambda \hat{a}'(\lambda)| = \beta |\lambda|^{-\beta} = \beta|\hat{a}(\lambda)|$. Moreover,
$\lambda \hat{a}(\lambda)=\lambda^{1-\beta}$ belongs to $\Sigma_{\sigma}$ and
$\hat{a}(\lambda)=\lambda^{-\beta}$ to $\Sigma_{\pi-\phi}$
with $\sigma = |\beta-1|\tfrac{\pi}{2}$ and $\phi = \tfrac{\pi}{2}(2-\beta)$.
Our assumption on $\phi_{A-\rho} $ then yields  $\phi_{A-\rho} + \sigma<\pi/2$ and $\phi>\phi_{A-\rho}$.
\end{example}

We add a basic example from viscoelasticity discussed Section~5.2 of \cite{Pruss93}.

\begin{example}
Let $a(t) = \nu+\mu t$ with $\nu, \mu>0$ be the kernel arising in a Kelvin--Voigt solid.
Let $A$ be sectorial with   $\phi_A<\pi/2$ and let $-A$ possess an $H^\infty$-calculus.
We show  the conditions of Theorem \ref{thm:main}.

Let  $\lambda\in \C_+$ with $\Re \lambda >w$. We first observe that $\hat{a}(\lambda) = \frac{\nu}{\lambda} + \frac{\mu}{\lambda^2}$.
It suffices to check \eqref{it:conditionimportant}.  One has $|\lambda\hat{a}'(\lambda)|\leq 2\,|\hat{a}(\lambda)|$ for any choice $w\ge0$.
Take  $\sigma>0$ with $\phi_A+\sigma<\pi/2$ and set $\phi=\frac{\pi}{2}-\sigma >\phi_A$. Notice that
$\lambda\hat{a}(\lambda) =\nu + \frac{\mu}{\lambda}$ belongs to $\nu+ (B(0,\mu/w)\cap \C_+)$.
Hence,  $\lambda\hat{a}(\lambda)  \in \Sigma_{\sigma}$ for a fixed sufficiently large $w$.
This fact then implies that  $\hat{a}(\lambda)  \in \Sigma_{\sigma+\pi/2}=\Sigma_{\pi-\phi}$.
\end{example}

Our final example cannot be treated within our setting.
\begin{example}
Let $a(t) = t$. Then $\hat{a}(\lambda) = \frac{1}{\lambda^2}$ and so $\lambda \hat{a}(\lambda) = \lambda^{-1}$.
Hence we have to take $\sigma = \pi/2$, which contradicts the assumption $\phi_A + \sigma <\pi/2$.
\end{example}

\subsection{Illustration\label{subsec:illustrate}}
In this section we  present an example of a stochastic Volterra equation with all details.
This is an illustration how the results from the previous sections can be combined. One can easily
treat much larger classes of examples. We study the equation
\begin{equation}\label{eq:problem}
 u(t) = u_0 + \int_0^t a(t-s) A u(s) \, ds + L(t).
\end{equation}
with $a(t) = t^{\beta-1}/\Gamma(\beta)$ for any fixed $\beta\in (0,2)$ and
\[A u = \sum_{m,n=1}^d a_{m,n} u_{x_m, x_n} + \sum_{n=1}^d b_n u_{x_n} + cu.\]
We assume that
\begin{itemize}
\item $b_n,c\in L^\infty(\R^d)$,
\item $a_{m,n}\in C^{\varepsilon}_b(\R^d)$ for some $\varepsilon>0$,
\item $a_{mn} = a_{nm}$ are real valued and $\sum_{m,n=1}^N a_{m,n}(x)\xi_n \xi_m\geq \delta|\xi|^2$.
\end{itemize}
Let $D(A) = H^2(\R^d)$. The next result follows from Theorem \ref{thm:main}.

\begin{proposition}
Assume the above conditions and that $L$ is a local $L^2$-martingale with c\`adl\`ag/continuous paths almost surely.
Then \eqref{eq:problem} has a unique weak solution $u$
and $u$ has a modification with c\`adl\`ag/continuous trajectories almost surely.
\end{proposition}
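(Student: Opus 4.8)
The plan is to verify that the operator $A$ and the kernel $a$ satisfy all three hypotheses of Theorem~\ref{thm:main}, and then simply invoke that theorem. For the kernel, $a(t)=t^{\beta-1}/\Gamma(\beta)$ with $\beta\in(0,2)$ is exactly the case treated in Example~\ref{ex:standarda}: conditions (2) and (3) of Theorem~\ref{thm:main} hold with $w=0$, $\sigma=|\beta-1|\tfrac{\pi}{2}$ and $\phi=\tfrac{\pi}{2}(2-\beta)$, provided we can produce a shift $\rho$ with $\phi_{A-\rho}<\min\{\tfrac{\pi}{2}(2-\beta),\tfrac{\pi}{2}\beta\}$ and a bounded $H^\infty$-calculus for $-(A-\rho)$. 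So the whole argument reduces to establishing hypothesis (1) of Theorem~\ref{thm:main} for this particular elliptic operator with sectoriality angle \emph{strictly less than any prescribed positive number}.

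First I would record that, under the stated assumptions ($a_{mn}=a_{nm}$ real-valued, $a_{mn}\in C^\varepsilon_b(\R^d)$, uniform ellipticity $\sum a_{mn}(x)\xi_n\xi_m\ge\delta|\xi|^2$, lower order coefficients in $L^\infty$), the principal part $A_0u=\sum_{m,n}a_{mn}u_{x_mx_n}$ is, up to the Hölder continuity of the coefficients, a ``nice'' second order elliptic operator. The symmetry and positivity of $(a_{mn})$ mean that the principal symbol $\sum a_{mn}(x)\xi_n\xi_m$ is real and positive for $\xi\ne0$, i.e.\ it lies in the sector $\{\lambda\ne0:|\arg\lambda|\ge\phi\}$ for \emph{every} $\phi\in(0,\pi]$. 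Thus Example~3 (the elliptic-operator example in Section~\ref{subsec:funccalc}, via Theorem~13.13 in \cite{KunWeis04}) applies with an arbitrarily small ellipticity-sector angle: for every $\phi'>0$ there is a $w=w(\phi')$ such that $A-w$ is sectorial of angle $\phi'$ and $-(A-w)$ has a bounded $H^\infty$-calculus. In particular I may choose $\phi'<\min\{\tfrac{\pi}{2}(2-\beta),\tfrac{\pi}{2}\beta\}$ and set $\rho=w(\phi')$; then hypothesis (1) holds, and since also $\phi_{A-\rho}\le\phi'<\pi/2$ we are in the regime of Theorem~\ref{thm:main}.

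With (1), (2), (3) all verified, Theorem~\ref{thm:main} directly gives: the deterministic Volterra equation \eqref{eq:volterraDet} has a resolvent $S$, the stochastic equation \eqref{eq:problem} has a unique weak solution given by $u(t)=S(t)u_0+\int_0^t S(t-s)\,dL(s)$, and $u$ has a modification whose paths are càdlàg (respectively continuous) almost surely whenever those of $L$ are. This is precisely the assertion of the Proposition, so the proof concludes by citing Theorem~\ref{thm:main}.

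The only genuinely non-routine point is the sharp angle bound: one must know that a real symmetric uniformly elliptic second-order operator with merely Hölder-continuous top-order coefficients admits, after a suitable shift, an $H^\infty$-calculus with sectoriality angle that can be taken \emph{strictly below} the threshold $\min\{\tfrac\pi2(2-\beta),\tfrac\pi2\beta\}$ — which may be tiny when $\beta$ is close to $0$ or $2$. This is exactly what Theorem~13.13 of \cite{KunWeis04} (as quoted in Example~3) delivers, because the relevant angle is governed by the numerical range of the principal symbol, and here that symbol is real and positive, giving symbol-sector angle $0$ and hence $H^\infty$-angle arbitrarily small after shifting. I would make this point explicit rather than hide it, since it is the whole reason the symmetry and real-valuedness hypotheses on $(a_{mn})$ are imposed. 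Everything else is a transcription of Example~\ref{ex:standarda} and an appeal to Theorem~\ref{thm:main}.
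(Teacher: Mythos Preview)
Your proof is correct and follows essentially the same route as the paper's: invoke Theorem~13.13 of \cite{KunWeis04} to obtain, for every $\phi'>0$, a shift $\rho$ with $\phi_{A-\rho}\le\phi'$ and a bounded $H^\infty$-calculus for $-(A-\rho)$, choose $\rho$ so that $\phi_{A-\rho}<\min\{\tfrac{\pi}{2}(2-\beta),\tfrac{\pi}{2}\beta\}$, and then apply Example~\ref{ex:standarda} and Theorem~\ref{thm:main}. Your added explanation of \emph{why} the angle can be made arbitrarily small---because the real, symmetric, positive principal symbol has argument zero---is a useful clarification that the paper's proof leaves implicit.
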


\begin{proof}
Theorem~13.13 of \cite{KunWeis04} shows that $\lim_{\rho\to \infty} \phi_{A-\rho} = 0$ and that  $-(A-\rho)$ has a bounded $H^\infty$-calculus  for all
sufficiently large $\rho$. We choose $\rho\ge0$ so that $\phi_{A-\rho}<\min\{\tfrac{\pi}{2}(2-\beta), \tfrac{\pi}{2}\beta\}$.
Setting $\sigma = |\beta-1|\tfrac{\pi}{2}$ and $\phi = \tfrac{\pi}{2}(2-\beta)$,
the conditions of Theorem \ref{thm:main} hold due to Example~\ref{ex:standarda}.
\end{proof}


\end{document}